\definecolor{indigo}{rgb}{0.29, 0.0, 0.51}  
\theoremstyle{plain}
\newtheorem{theorem}{Theorem}
\newtheorem{lemma}[theorem]{Lemma}
\theoremstyle{definition}
\theoremstyle{remark}
\newtheorem{remark}[theorem]{Remark}
\newtheorem{problem}[theorem]{Problem}
\newtheorem{fact}[theorem]{Fact}
\numberwithin{theorem}{section}
\newcommand{\Z}{\mathbb{Z}}           
\newcommand{\CP}{\mathbb{CP}}         
\newcommand*\bigcdot{\mathpalette\bigcdot@{0.6}}
\newcommand*\bigcdot@[2]{\mathbin{\vcenter{\hbox{\scalebox{#2}{$\m@th#1\bullet$}}}}}
    \pgfplotsset{compat=1.18} 
\begin{document}

\title{A note on the topology of Lefschetz fibrations}

\author{Sierra Knavel}


\address{School of Mathematics \\ Georgia Institute of Technology \\ Atlanta, GA}
\email{sknavel3@gatech.edu} 



\begin{abstract}
We prove an upper bound for the first Betti number of a nontrivial genus-$g$ Lefschetz fibration. We also show that if the monodromy of a Lefschetz fibration is transitive with respect to the mapping class group, the Lefschetz fibration is simply connected. Lastly, we discuss a potential family of indecomposable genus-2 Lefschetz fibrations with maximally non-trivial first homology which would be candidates for large fundamental group computations, if they exist.
\end{abstract}

\maketitle

\section{Introduction}

The dual results of Donaldson \cite{Donaldson1998} and Gompf \cite{Gompf_01} show that Lefschetz pencils, which can be blown up to Lefschetz fibrations over the 2-sphere, are closely related to closed symplectic 4-manifolds. Given this connection, there has been a great deal of study of symplectic manifolds using Lefschetz fibrations. We are interested in studying the topological properties of a manifold that can be extracted from a Lefschetz fibration. We begin by considering the bounds on the first Betti number. 
Suppose $f: X \rightarrow S^2$ is a nontrivial genus-$g$ Lefschetz fibration structure on $X$, a smooth, closed, compact 4-manifold. 

\begin{theorem}\label{ThmBettinum}
    Let $X$ be a smooth, closed, compact 4-manifold and $f: X \rightarrow S^2$ be a nontrivial genus-$g$ Lefschetz fibration, then 
    \begin{equation}
        0 \leq b_1(X) \leq 2g-2.    
    \end{equation}
\end{theorem}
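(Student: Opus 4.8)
The plan is to reduce the statement to a question about the homology classes of the vanishing cycles, and then to extract the bound from the homological monodromy relation. Writing $c_1,\dots,c_k$ for the vanishing cycles of $f$ (with $k\ge 1$ since $f$ is nontrivial) and $\Sigma_g$ for a regular fiber, I would first record the standard handle picture: the restriction of $X$ to a disk containing all critical values is $\Sigma_g\times D^2$ with a $2$-handle attached along each $c_i$, and the remaining fiber $\Sigma_g\times D^2$ is glued on to cap it off. A Mayer--Vietoris (or van Kampen) computation then gives $H_1(X;\Q)\cong H_1(\Sigma_g;\Q)/\langle[c_1],\dots,[c_k]\rangle$, so that $b_1(X)=2g-r$ where $r=\dim_{\Q}\operatorname{span}\{[c_1],\dots,[c_k]\}$. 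The lower bound $b_1(X)\ge 0$ is then immediate, and the upper bound $b_1(X)\le 2g-2$ is equivalent to the claim $r\ge 2$, which is where the work lies.

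To control $r$ I would pass to the induced relation on first homology. The factorization $\tau_{c_1}\cdots\tau_{c_k}=1$ in $\mathrm{MCG}(\Sigma_g)$ maps, under $\mathrm{MCG}(\Sigma_g)\to Sp(2g,\Z)$, to a product of transvections equal to the identity, where by the Picard--Lefschetz formula each right-handed twist acts by $T_{c}(x)=x+\langle x,[c]\rangle[c]$. The key observation is that $T_c$ depends only on $[c]$ up to sign, so if $r\le 1$ then every nonseparating $c_i$ has $[c_i]=\pm v_0$ for a single primitive class $v_0$ and induces the \emph{same} transvection $T_{v_0}$, while the separating $c_i$ act trivially. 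Hence the product collapses to $T_{v_0}^{\,n}$, where $n$ is the number of nonseparating vanishing cycles, and since $T_{v_0}^{\,n}(x)=x+n\langle x,v_0\rangle v_0$ this is the identity only if $n=0$. Thus $r=1$ is impossible: as soon as one vanishing cycle is nonseparating we must have $r\ge 2$.

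The remaining, and I expect hardest, point is to exclude $r=0$, i.e.\ to show that a nontrivial Lefschetz fibration cannot have \emph{all} of its vanishing cycles separating; equivalently, that the identity is not a nonempty product of right-handed Dehn twists about separating curves. For $g=1$ this is vacuous, since the torus has no essential separating curves, and the argument above already forces $r=2g=2$ and hence $b_1(X)=0$. For $g\ge 2$ the homological monodromy gives no information, because separating twists lie in the Torelli group and act trivially on $H_1(\Sigma_g)$, so a genuinely different input is needed. Here I would invoke a sign-definite invariant of such factorizations: either a signature count, using that a separating vanishing cycle of type $h$ contributes a fixed nonzero amount to $\sigma(X)$, so that an all-separating factorization over $S^2$ is obstructed, or a homomorphism on the Torelli/Johnson-kernel side (for instance Morita's Casson-invariant homomorphism) that has one sign on every separating twist, so that a nonempty positive product cannot equal the identity. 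Establishing this exclusion cleanly is the crux; once it is in hand, every nontrivial fibration has a nonseparating vanishing cycle, whence $r\ge 1$, and combined with the previous paragraph this yields $r\ge 2$ and the desired bound $0\le b_1(X)\le 2g-2$.
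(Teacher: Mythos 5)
Your proposal is correct and follows essentially the same route as the paper: reduce $b_1(X)$ to $2g-r$ where $r$ is the rank of the span of the vanishing-cycle classes via the handle decomposition, rule out $r=1$ with the Picard--Lefschetz transvection formula (the paper's Fact on the symplectic representation), and rule out $r=0$ by the fact that a nontrivial fibration must have a nonseparating vanishing cycle --- which the paper simply cites from Smith's Theorem 6.2, i.e.\ exactly the external input you identify as the crux rather than something the paper proves itself. If anything, your treatment of the rank-one case (handling signs $\pm v_0$ and the trivial homological action of separating twists explicitly) is slightly more careful than the paper's contradiction argument, which tacitly assumes all remaining cycles are homologous to $[\eta]$.
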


The bound for Theorem~\ref{ThmBettinum} is sharp for $g=2$ as there are Lefschetz fibrations which realize $b_1 = 0,1, \text{and } 2$ found in \cite{Baykur_Korkmaz_2017, Amoros_ABKP_99, Korkmaz2009, Ozbagci_Stipsicz_2000}. This result improves on the bound previously uncovered by Smith \cite{Smith_Hodge_1999} and Stipsicz \cite{Stipsicz_Chern_2000} who discover that in a nontrivial Lefschetz fibration, at least one vanishing cycle must be nonseparating, which implies that $0 \leq b_1(X) \leq 2g-1$. Stipsicz improves this result to say that there are at least $\frac{4g+2}{5}$ vanishing cycles in a genus-$g$ Lefschetz fibration \cite{Stipsicz_99}. However, they do not comment on the possible homological behavior of the curves. We will see in the proof of Theorem~\ref{ThmBettinum} that at least two of these $\frac{4g+2}{5}$ vanishing cycles must be nonhomologous and nonseparating.

This Theorem is generalized to a Lefschetz fibration with a higher genus base space in Theorem~\ref{Thm:GeneralizedBettinum}. The new inequality becomes the following.%
\[ 0 \leq b_1(X) \leq 2g + 2h - 2 \]
Here, $h$ is the genus of the base space. 

Theorem~\ref{ThmBettinum} is motivated by the conjecture that all nontrivial genus-2 Lefschetz fibrations have Abelian fundamental group with at most two generators. Now we can conclude that in addition to the fundamental group having at most three generators, its Abelianization must have at most rank two. It is also an improvement toward \cite[Question~20]{Baykur_2022} which conjectures that the upper bound on $b_1(X)$ can be improved to $g+1$ for a nontrivial genus-$g$ Lefschetz fibration. The author will explore sharpening this bound for $g>2$.

Gompf shows that every finitely presented group is the fundamental group of some closed symplectic 4-manifold \cite{Gompf_95}. Amorós, Bogomolov, Katzarkov, Pantev, and Smith give an explicit method for constructing a symplectic Lefschetz fibration whose total space has fundamental group isomorphic to a given finitely presented group \cite{Amoros_ABKP_99}. Korkmaz gives another construction of this result using Matsumoto's family of genus-$g$ Lefschetz fibrations \cite{Korkmaz2009}. In Korkmaz's construction, the vanishing cycles for a fixed genus are known and computations of the fundamental group are shown directly. It is conjectured that for genus-2 Lefschetz fibrations, it is not possible for the fundamental group to have more than two generators.  One instance of this conjecture appears in Korkmaz's paper \cite{Korkmaz2009}. Resolving this conjecture is difficult without a complete understanding of which genus-2 Lefschetz fibrations exist, which are indecomposable, and what restrictions could therefore exist on their fundamental groups. It is also conjectured that the fundamental groups of genus-2 Lefschetz fibrations are Abelian \cite{Korkmaz_Stipsicz_2009}, as these are the only kinds of fundamental groups that have appeared in computations. To work toward these conjectures, consider the following question.

\begin{problem}{\label{Prob:hombasis}}
    For nontrivial genus-2 Lefschetz fibrations with $b_1(X)=2$, are there two homology classes of vanishing cycles $[\eta_1]$ and $[\eta_2 ]$ such that $[\eta_1]$ and $[\eta_2 ]$ are a part of an integral basis for homology?  
\end{problem}

Resolving Problem~\ref{Prob:hombasis} in the positive would signify that the Abelianization of the fundamental group has at most two generators. There are examples where one can find two homologically independent vanishing cycles that do not form part of a basis for $\Z^4$ but can find a different pair where they do. Of these examples are two smallest Lefschetz fibrations of genus-2 with seven \cite{Baykur_Korkmaz_2017} and eight \cite{Matsumoto_2004} vanishing cycles, both with fundamental group $\Z^2$. It is currently unknown if there exists indecomposable genus-$g$ Lefschetz fibrations with the same number (and type) of nonseparating and separating vanishing cycles which have nonhomeomorphic total spaces. However, suitably small genus $g=2$ Lefschetz fibrations are rational or ruled surfaces and therefore have diffeomorphic total spaces, see Lemma 8 of \cite{Baykur_Korkmaz_2017} and Equation~\eqref{eq:ns_bounds_BK}. 

We now turn to the relationship between the fundamental group and monodromies of Lefschetz fibrations. Recall that the monodromy of a nontrivial genus-$g$ Lefschetz fibration is \textit{transitive} if it generates the mapping class group of the genus-$g$ fiber.

\begin{theorem}\label{ThmTransitivity}
    Suppose $f : X \rightarrow \Sigma$ where $\Sigma = S^2$ or $D^2$ is a nontrivial genus-$g$ Lefschetz fibration with monodromy $\phi$. If $\phi$ is transitive with respect to the mapping class group of a genus-$g$ surface, then $X$ is simply connected.    
\end{theorem}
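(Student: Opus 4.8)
The plan is to reduce the statement to a purely group-theoretic fact about surface groups and then exploit the change-of-coordinates principle for simple closed curves. First I would record the standard presentation of the fundamental group of the total space. Writing the monodromy $\phi = t_{c_1} \cdots t_{c_n}$ as a product of right-handed Dehn twists along vanishing cycles $c_1, \dots, c_n \subset \Sigma_g$, the total space over $D^2$ is obtained from $\Sigma_g \times D^2$ by attaching a $2$-handle along each $c_i$, so van Kampen gives $\pi_1(X) \cong \pi_1(\Sigma_g)/N$, where $N = \langle\langle [c_1], \dots, [c_n] \rangle\rangle$ is the normal closure of the classes of the vanishing cycles. When $\Sigma = S^2$, capping off with a second copy of $\Sigma_g \times D^2$ attaches only handles of index $\geq 2$, and a van Kampen computation shows this leaves $\pi_1$ unchanged; thus in both cases it suffices to prove $N = \pi_1(\Sigma_g)$. (The case $g = 0$ is trivial since the fiber is simply connected, so assume $g \geq 1$.)

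Next I would upgrade the elementary homological computation to the level of $\pi_1$. The easy observation is that the images of the $t_{c_i}$ in $\mathrm{Sp}(2g,\Z)$ are the transvections along $[c_i]$, and transitivity forces these to generate $\mathrm{Sp}(2g,\Z)$; a short argument then shows the $[c_i]$ span $H_1(\Sigma_g)$, so $H_1(X) = 0$. This alone is insufficient, since a surface group can admit nontrivial perfect quotients. The key step is the $\pi_1$-level refinement: using the explicit action of a Dehn twist on $\pi_1(\Sigma_g)$, each based automorphism $\tau_i$ representing $t_{c_i}$ satisfies $\tau_i(\alpha)\alpha^{-1} \in \langle\langle [c_i] \rangle\rangle \subseteq N$ for every $\alpha$. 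Hence each $\tau_i$ preserves $N$ and induces the identity on the quotient $G = \pi_1(\Sigma_g)/N$. Since $N$ is moreover normal, it is invariant under the subgroup of $\mathrm{Mod}(\Sigma_g) \cong \mathrm{Out}^+(\pi_1(\Sigma_g))$ generated by the $t_{c_i}$, namely all of $\mathrm{Mod}(\Sigma_g)$ by transitivity.

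With $N$ now a $\mathrm{Mod}(\Sigma_g)$-invariant normal subgroup, I would invoke the change-of-coordinates principle. Because the $t_{c_i}$ generate $\mathrm{Mod}(\Sigma_g)$, which surjects onto $\mathrm{Sp}(2g,\Z)$ while twists along separating curves act trivially there, at least one $c_i$ is nonseparating; as noted in the introduction, this nonseparation is already guaranteed by Smith and Stipsicz. The mapping class group acts transitively on isotopy classes of nonseparating simple closed curves, so the $\mathrm{Mod}(\Sigma_g)$-orbit of such a $c_i$ is the set of all nonseparating simple closed curves. Invariance of $N$ then places the class of every nonseparating simple closed curve into $N$. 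Since a standard geometric generating set $a_1, b_1, \dots, a_g, b_g$ of $\pi_1(\Sigma_g)$ consists of nonseparating curves, this yields $N = \pi_1(\Sigma_g)$, whence $\pi_1(X) = 1$.

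I expect the main obstacle to be precisely this promotion from homology to $\pi_1$ — specifically, verifying cleanly that the based twist $\tau_i$ moves every element only by elements of the normal closure of $[c_i]$, so that $N$ is genuinely invariant and $G$ is fixed, while keeping the outer-versus-based automorphism bookkeeping honest. Once that lemma is in place, transitivity together with the change-of-coordinates principle finishes the argument with little additional work.
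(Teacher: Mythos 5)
Your proof is correct, but the engine in your key step is genuinely different from the paper's. Both arguments share the same skeleton: $\pi_1(X)$ is a quotient of $\pi_1(\Sigma_g)$ by a normal subgroup $N$ containing the classes of the vanishing cycles, at least one vanishing cycle $\eta$ is nonseparating, and transitivity plus the change-of-coordinates principle propagates the relation $[\eta]=1$ to every nonseparating simple closed curve, which together generate $\pi_1(\Sigma_g)$. Where you diverge is in how that propagation is carried out. The paper argues geometrically: transitivity produces a loop $\delta$ in $S^2\setminus\{q_1,\dots,q_k\}$ whose monodromy carries a given nonseparating curve $\gamma$ to $\eta$; pulling the fibration back over $\delta$ yields an immersed annulus in $X$ cobounding $\gamma$ and $\eta$, and capping with the vanishing disk gives an immersed disk with boundary $\gamma$, so $\gamma$ is nullhomotopic in $X$. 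You argue algebraically: the lemma that a based representative of $t_{c}$ moves every class only by elements of $\langle\langle [c]\rangle\rangle$ shows $N$ is invariant under each $t_{c_i}$, hence under all of $\mathrm{Mod}(\Sigma_g)$ by transitivity, and then Mod-invariance of $N$ plus change of coordinates places every nonseparating class in $N$. Your route isolates a clean, reusable group-theoretic fact --- in effect, that the monodromy group preserves the kernel of $\pi_1(F)\rightarrow\pi_1(X)$, which is exactly what the paper's annulus-plus-thimble construction proves geometrically --- and never needs to realize a mapping class as the monodromy of an explicit loop in the base; the paper's route, by contrast, produces explicit immersed nullhomotopies in $X$ and makes the role of the Lefschetz thimble transparent. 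One small inaccuracy on your side: capping the fibration over $D^2$ to obtain the one over $S^2$ does not in general leave $\pi_1$ unchanged (the cap's 2-handle can impose one further relation when there is no section, as the paper itself notes), but this is harmless, since the map $\pi_1(X_{D^2})\rightarrow\pi_1(X_{S^2})$ is surjective and surjectivity is all your reduction requires.
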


\begin{remark}
    If in the theorem, the nontrivial Lefschetz fibration has base space a higher genus surface with any number of boundary components, then the fundamental group of the total space must be the fundamental group of the base space. 
\end{remark}

The condition of a monodromy being transitive has appeared naturally in several places. We now discuss its appearance in results surrounding genus-$2$ Lefschetz fibrations, a fiber genus where the vanishing cycles have the additional structure of being hyperelliptic granted by the hyperellipticity of $Mod(\Sigma_2)$. Siebert and Tian show that a genus-2 Lefschetz fibration is a two-fold branched cover of an $S^2$-bundle over $S^2$ and the transitivity of the monodromy is sufficient and necessary for the connectivity of the branch locus of that covering  \cite{Seibert_Tian_1999}. Siebert and Tian later prove that a genus-2 Lefschetz fibration with only nonseparating vanishing cycles and with transitive monodromy has a holomorphic structure \cite{Siebert_Tian_2005}. Smith reproves a result from the thesis of Chakiris \cite{Chakiris_83} that completely classifies genus-2 Lefschetz fibrations with only nonseparating vanishing cycles and K\"{a}hler total space \cite{Smith_Hodge_1999}. It follows from this result that these objects are simply connected, as their vanishing cycles contain an obvious basis for first homology (see \cite{Farb_Margalit} for a description of the genus-2 chain relations and hyperelliptic involution). Combining the result of Chakiris \cite{Chakiris_83} and Smith \cite{Smith_Hodge_1999} with that of Siebert and Tian \cite{Siebert_Tian_2005} gives the following Lemma for genus-2 Lefschetz fibrations.

\begin{lemma}\label{ThmHolomorphic}
    In a nontrivial genus-2 Lefschetz fibration $f: X \rightarrow S^2$ with monodromy $\phi$ and only nonseparating vanishing cycles, $X$ is holomorphic if and only if $\phi$ is transitive. 
\end{lemma}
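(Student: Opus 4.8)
The plan is to establish the two implications separately, assembling the three cited results exactly as the surrounding discussion suggests. For the forward direction, if $\phi$ is transitive then $X$ is holomorphic, I would invoke the theorem of Siebert and Tian \cite{Siebert_Tian_2005} essentially verbatim: a genus-$2$ Lefschetz fibration with only nonseparating vanishing cycles whose monodromy is transitive carries a holomorphic structure. Since these are precisely our standing hypotheses, this direction requires no additional work beyond recording the citation.

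For the converse, that $X$ holomorphic forces $\phi$ transitive, I would appeal to the classification of Chakiris \cite{Chakiris_83}, reproved by Smith \cite{Smith_Hodge_1999}, of all genus-$2$ Lefschetz fibrations with only nonseparating vanishing cycles and K\"ahler (equivalently holomorphic) total space. Because $X$ is holomorphic, its fibration must occur in this explicit list of families, and the problem reduces to inspecting the monodromy factorization of each family and verifying that the Dehn twists about its vanishing cycles generate $Mod(\Sigma_2)$. Here I would lean on the genus-$2$ structure: the hyperelliptic involution is central in $Mod(\Sigma_2)$, so the hyperelliptic mapping class group is the whole mapping class group, which is generated by the twists $t_{c_1},\dots,t_{c_5}$ about the standard chain of curves. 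The chain relations recorded in \cite{Farb_Margalit} then let me recognize when a configuration of vanishing cycles realizes such a generating chain, and hence when the monodromy is transitive.

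The hard part will be exactly this last verification. It is \emph{not} enough to use Smith's observation that the vanishing cycles of the classified examples contain an integral basis for $H_1(\Sigma_2;\Z)$: that yields simple connectivity of $X$ but is strictly weaker than transitivity, since a collection of curves generating first homology need not generate the mapping class group. The delicate step is to upgrade ``contains a homology basis'' to ``generates $Mod(\Sigma_2)$'' using the chain structure, while simultaneously confirming that the Chakiris--Smith list is exhaustive, so that no holomorphic example whose vanishing cycles span only a proper, non-transitive subgroup can escape the classification.

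As an independent consistency check on the converse, I would cross-reference the earlier equivalence of Siebert and Tian \cite{Seibert_Tian_1999}: transitivity of the monodromy is necessary and sufficient for connectivity of the branch locus in the double-branched-cover description of a genus-$2$ Lefschetz fibration. A holomorphic member of the classification presents a connected branch locus, and by that equivalence the monodromy is then transitive, giving a second route to the converse that sidesteps the case-by-case generation check.
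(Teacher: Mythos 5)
Your proposal takes essentially the same route as the paper: the forward direction is the verbatim citation of Siebert--Tian \cite{Siebert_Tian_2005}, and the converse is the paper's argument that the Chakiris--Smith classification \cite{Chakiris_83, Smith_Hodge_1999} forces the monodromy to be a composition of the words $\alpha$, $\beta$, $\gamma$ in the twists about the standard chain curves $c_1,\dots,c_5$, which are Humphries generators of $Mod(\Sigma_2)$, so the monodromy group is the full mapping class group. Even the step you flag as delicate --- upgrading ``vanishing cycles contain a homology basis'' to ``twists about the vanishing cycles generate $Mod(\Sigma_2)$'' --- is resolved in the paper exactly as you propose, via the chain/hyperelliptic structure rather than the homological observation.
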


\begin{remark}
    A symplectic 4-manifold which has a holomorphic structure is also K\"{a}hler.
\end{remark}

For fiber genus-2, a transitive monodromy is a strong condition. For any genus-2 Lefschetz fibration, Theorem~\ref{ThmTransitivity} and Lemma~\ref{ThmHolomorphic} together say that a holomorphic structure implies simply connectedness.
 
Birman and Hilden \cite{Birman_Hilden_result} show that every element of the mapping class group of the genus-2 surface has a representative that commutes with the hyperelliptic involution. This extra structure of hyperellipticity of the vanishing cycles on a genus-$g$ Lefschetz fibration, which does not hold for all simple closed curves which are vanishing cycles for fibers with genus $g>2$, allows for straightforward computations of invariants of the closed 4-manifold such as the Euler characteristic and signature. Section~\ref{sec:genus_2} discusses the proof of Theorem~\ref{ThmBound_ns} which sharply relates the number of nonseparating and separating vanishing cycles of a genus-2 Lefschetz fibration. This is an improvement on the inequality uncovered by Baykur and Korkmaz \cite{Baykur_Korkmaz_2017} whose equality gives rise to an interesting family of highly non-simply connected Lefschetz fibrations. A (hyperelliptic) genus-2 Lefschetz fibration with $n$ nonseparating vanishing cycles and $s$ separating vanishing cycles is said to be of type $(n,s)$. 

\begin{theorem}\label{ThmBound_ns}
    Suppose $f:X\rightarrow S^2$ is a genus-2 Lefschetz fibration with $n$ nonseparating and $s$ separating vanishing cycles. Then $2n-s \leq 5$ and the family of Lefschetz fibrations of type $(n,s)=(2k, 4k-5), k\geq 2$ is indecomposable, has $b_1 = 2$ for all $k$, and has $b_2 = n+s-2$.  
\end{theorem}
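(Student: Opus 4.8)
The plan is to split the statement into the numerical inequality $2n-s\le 5$ and the three assertions about the family $(n,s)=(2k,4k-5)$, and to drive everything from the two standard invariants of a genus-$2$ Lefschetz fibration expressed in terms of $(n,s)$.

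First I would record these invariants. Counting singular fibers gives $\chi(X)=2\chi(S^2)-2\chi(\Sigma_2)+(n+s)=n+s-4$. Since every genus-$2$ mapping class is hyperelliptic, Endo's signature formula for hyperelliptic Lefschetz fibrations specializes to $\sigma(X)=-\tfrac{1}{5}(3n+s)$; I would confirm both coefficients against the standard small genus-$2$ models before using it. Eliminating $n+s$ between these two identities produces the single clean relation
\[
\chi(X)+3\sigma(X)=-4-\tfrac{2}{5}(2n-s),
\]
so that the desired inequality $2n-s\le 5$ is \emph{equivalent} to the lower bound $\chi(X)+3\sigma(X)\ge -6$. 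The whole of part~(i) thus reduces to establishing this bound on $\chi+3\sigma$ from the geometry of a relatively minimal genus-$2$ fibration. Pinning down the exact constant and, above all, the direction of this inequality is where I expect essentially all of the difficulty to lie, and it is the main obstacle of the proof.

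For the family the homological assertions are then quick. Given $b_1(X)=2$, the identity $\chi=2-2b_1+b_2$ together with $\chi=n+s-4$ forces $b_2=n+s-2$, which is the last assertion. To obtain $b_1(X)=2$ I would use that $H_1(X)\cong H_1(\Sigma_2)/\langle[\eta_i]\rangle$, where the $\eta_i$ range over the vanishing cycles and the separating ones contribute nothing. Reading off the explicit monodromy of Baykur--Korkmaz's construction \cite{Baykur_Korkmaz_2017}, one checks that the nonseparating vanishing cycles of the family all lie in a single rank-$2$ sublattice of $H_1(\Sigma_2;\Z)\cong\Z^4$; this yields $b_1(X)\ge 2$, and combined with the bound $b_1\le 2g-2=2$ of Theorem~\ref{ThmBettinum} gives $b_1(X)=2$ for every $k$.

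Finally, indecomposability would follow from additivity under fiber sum. Concatenating global monodromy words shows that $n$, $s$, and hence $2n-s$, are additive over the factors of any fiber-sum decomposition. The sharp form of part~(i) says that each nontrivial genus-$2$ fibration over $S^2$ already accounts for the full extremal amount of $2n-s$, so two nontrivial factors would overshoot the value $2n-s=5$ attained by the entire family; hence no member of the family is a nontrivial fiber sum. This is exactly the step that consumes the \emph{sharp} per-factor form of the inequality from part~(i), so I would establish (i) in that form first and then invoke additivity. As a consistency check, the $k=2$ member has $n+s=7$ singular fibers, matching the smallest genus-$2$ Lefschetz fibration of \cite{Baykur_Korkmaz_2017}.
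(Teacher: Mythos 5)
Your proposal has two genuine gaps, and the first is compounded by a sign confusion. You reduce the inequality to $\chi(X)+3\sigma(X)\ge -6$ and then explicitly defer proving it, calling it ``the main obstacle'' --- but that bound is in fact false as you have oriented it: as a universal statement about nontrivial genus-2 fibrations, $2n-s\le 5$ fails already for Matsumoto's $(6,2)$ fibration ($2n-s=10$) and for the $(20,0)$ fibration ($2n-s=40$). The inequality that actually holds for every nontrivial genus-2 Lefschetz fibration is $2n-s\ge 5$ (the ``$\le$'' in the statement is a typo; the paper's own derivation in Equation~\eqref{eq_sharpline} ends with $2n-s\ge 5$, and the family $(2k,4k-5)$ sits on the extremal line $2n-s=5$). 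Moreover, the proof is not the hard analytic step you anticipate: it is one line, combining Baykur--Korkmaz's inequality $b_1(X)\ge 4-\tfrac{2}{5}(2n-s)$ (Equation 4 in Lemma 5 of \cite{Baykur_Korkmaz_2017}, resting on $b_2^-(X)\ge s+1$) with the new bound $b_1(X)\le 2$ from Theorem~\ref{ThmBettinum}. Your proposal never invokes Theorem~\ref{ThmBettinum} at this step, yet that is precisely what sharpens Baykur--Korkmaz's $2n-s\ge 3$ to $2n-s\ge 5$. Note also that your own indecomposability step betrays the sign problem: ``two nontrivial factors would overshoot $2n-s=5$'' only makes sense if each nontrivial factor satisfies the \emph{lower} bound $2n-s\ge 5$, so your part on the inequality and your part on fiber sums are mutually inconsistent as written. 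With the direction corrected, that fiber-sum argument (additivity of $(n,s)$, so two nontrivial summands force $2n-s\ge 10$) is exactly the paper's.

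The second gap is in your argument for $b_1=2$. You propose to read the homology classes of the nonseparating vanishing cycles off an explicit monodromy of the family, but only the $k=2$ member (the $(4,3)$) has a known monodromy --- the paper states explicitly that existence is open for $k\ge 3$ --- so there is nothing to read off and your argument cannot yield $b_1=2$ ``for all $k$.'' The paper instead argues abstractly from the type alone: for any fibration on the line $2n-s=5$, Equations~\eqref{eq:fractionsig} and~\eqref{eq:eulerchar} give $b_2^+(X)=\tfrac{1}{5}n+\tfrac{2}{5}s+b_1(X)-3$, and assuming $b_1=1$ (resp.\ $b_1=0$) forces $b_2^-(X)=s$ (resp.\ $s-1$), contradicting $b_2^-(X)\ge s+1$; hence $b_1\ge 2$, and Theorem~\ref{ThmBettinum} caps it at $2$. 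Your derivation of $b_2=n+s-2$ from $\chi=2-2b_1+b_2$ once $b_1=2$ is known is fine and agrees with the paper's computation ($b_2^+=n-3$, $b_2^-=2n-4$).
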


A Lefschetz fibration is \textit{indecomposable} if it cannot be decomposed as the fiber sum of two nontrivial Lefschetz fibrations, further details in Section~\ref{Sec:intro_indecomp}. The only Lefschetz fibration in this family that has been shown to exist is when $k=2,$ a type $(4,3)$. This was found by manipulating the monodromy of the genus-$2$ length forty chain relation \cite{Baykur_Korkmaz_2017} and can be generalized to create higher genera Lefschetz fibrations with small numbers of singular fibers. We remark that Baykur and Akhmedov-Park \cite[Proposition~3]{Akhmedov_Monden_21} have shown that the Lefschetz fibration of type $(6,7)$ is indecomposable (and minimal).

\subsection*{Acknowledgments}
I would like to express appreciation to my advisor, John Etnyre, for his unrelenting support, insight, and criticism as well as his profound belief in my abilities. This work was partially supported by NSF grants DMS-1745583, DMS-2244427, and DMS-2203312. Lastly, I would like to thank Inan\c{c} Baykur and Riccardo Pedrotti for helpful discussions and encouraging the author to consider Lefschetz fibrations with higher genus base.  

\section{Background}
%
\subsection{Lefschetz fibrations}
Solomon Lefschetz first defined a Lefschetz pencil in the 1924 book ``\textit{L’Analysis Situs et la G\'{e}om\'{e}trie Alg\'{e}brique}''. There it was described as a one-parameter family of hyperplane sections of a smooth projective variety which allowed isolated nondegenerate singularities. This structure helped to analyze the topology of complex varieties through vanishing cycles and monodromies.


Results on closed symplectic 4-manifolds often arise from the more accessible Lefschetz fibration structure. Let $\Sigma_{g}$ be a surface of genus-$g$. A \textit{Lefschetz fibration} is a smooth surjection $f:X\rightarrow S^2$ from $X$ a closed, compact 4-manifold. The preimage of a regular value of $f$ is a genus-$g$ surface, $\Sigma_g$, called a \textit{regular fiber}. There are finitely many critical values of $f$, whose preimages are \textit{singular fibers}. A singular fiber contains a critical point that is locally modeled in a complex coordinate chart by the function $f(z,w)= zw$. From this model, one can see that a singular fiber is  homeomorphic to a regular fiber with an embedded $S^1$ collapsed to a point. On nearby regular fibers, this collapsed $S^1$ is an embedded $S^1$ and is called a \textit{vanishing cycle}. 

A directed embedded loop in the base $S^2$ which travels counterclockwise about a critical value $q_i$ will have preimage $f^{-1}(q_i) = [0,1]\times \Sigma_g$ with $\Sigma \times \{0\}$ and $\Sigma \times \{1\}$ identified according to some self-diffeomorphism, $\phi$. This gluing map, called a \textit{monodromy} and denoted $\phi_i$, is supported entirely in an annular region containing the vanishing cycle, called $\eta$, corresponding to $q_i$. This self-diffeomorphism is a right-handed (\textit{positive}) Dehn twist about the vanishing cycle of the corresponding singular fiber, denoted $\tau_{\eta}$.

A Lefschetz fibration is \textit{relatively minimal} if $f: X \rightarrow S^2$ has no fiber containing a 2-sphere with self intersection -1. It is \textit{minimal} if $X$ is smoothly minimal. A 4-manifolds is smoothly minimal if there are no embedded 2-spheres $S \in X$ such that $S \cdot S = -1$.

Lifting a directed embedded loop in the base 2-sphere which encircles several critical values $\{q_1, q_2, ..., q_k \}$ in that order will have monodromy represented by the composition of positive Dehn twists along their corresponding vanishing cycles, $\tau_{\eta_{k}} ... \tau_{\eta_{2}} \tau_{\eta_{1}}$, read right to left. If a vanishing cycle separates a closed genus-$g$ surface into two non-zero genus pieces, the singular fiber is said to be \textit{separating}, or \textit{reducible}. Otherwise, the vanishing cycle is {nonseparating} and the singular fiber is \textit{nonseparating}, or \textit{irreducible}. The lift of an embedded loop which encircles a 2-disk without critical values in $S^2$ will be a trivial $\Sigma_g$-bundle over $S^1$, and therefore equivalent to the identity element in the mapping class group of the fiber, $Mod(\Sigma_g)$. The mapping class group, $Mod(\Sigma_g)$ is the set of self-diffeomorphisms of the genus-$g$ surface $\Sigma_g$ up to isotopy. By abuse of notation, we will denote by $\tau_\gamma$ the Dehn twist and its mapping class, which is the equivalence class of maps isotopic to it. By taking an embedded curve that separates $S^2$ into two disks, one containing all the critical values and one containing no critical values, we see that the monodromy of any Lefschetz fibration over the sphere must be some positive factorization of identity element in $Mod(\Sigma_g)$. 

More generally, if $\{q_1, \ldots, q_k \}$ are the critical values of a Lefschetz fibration $f: X \rightarrow S^2$, then the \textit{global monodromy} of $f$ is the map
\begin{equation*}
    m_f: \pi_1(S^2-\{q_1, \ldots, q_k \}, x_0) \rightarrow Mod(\Sigma_g)
\end{equation*}
where $x_0 \in S^2-\{q_1, \ldots, q_k \}$ and we have identified $f^{-1}(x_0)$ with $\Sigma_g$. The map takes a loop $\gamma$ representing an element of the fundamental group and returns the monodromy of the $\Sigma_g$-bundle over $S^1$ obtained by pulling $X$ back along $\gamma$. The monodromy is well-defined up to conjugating by an element of $Mod(\Sigma_g)$ due to the original identification of a fiber with $\Sigma_g$. A \textit{transitive monodromy} signifies that the map $m_f$ is onto.

In this way, the monodromy $\phi$ of a Lefschetz fibration gives a combinatorial description of the manifold which encodes all of its distinguishable features. For genus-0 and -1 Lefschetz fibrations, all possible monodromies are known. These 4-manifolds are completely classified and have been shown to have total spaces diffeomorphic to $\CP^2$, $\CP^1 \times \CP^1$, elliptic surfaces $E(n)$ and their blow-ups \cite{Kas_80,Moishezon_77}. Lefschetz fibrations of genus $g\geq 2$ are not classified and they will likely remain that way until the complexities of their mapping class groups are more expounded. Fortunately, $Mod(\Sigma_2)$ stands out in this family due to its hyperellipticity. A readers who wishes to read more about hyperellipticity in the setting of mapping class groups are directed to \cite{Margalit_Winarski}, but we will expand on the consequences of this extra physiology in the setting of Lefschetz fibrations. 

\subsection{Hyperelliptic Lefschetz fibrations}\label{Sec:hyperelliptic}
Any simple closed curve on the genus-$g$ surface is \textit{hyperelliptic} if it is isotopic to its mapping under the hyperelliptic involution, an order two automorphism of $\Sigma_g$ with $2g+2$ fixed points. If all vanishing cycles of a genus-$g$ Lefschetz fibration are hyperelliptic, then the Lefschetz fibration is said to be \textit{hyperelliptic}. For $g=2$, all curves are hyperelliptic and therefore all genus-2 Lefschetz fibrations are hyperelliptic. This extra structure allows for the computations of some invariants that are otherwise not as easily computable. The first is the fractional signature formula which has been explored by Matsumoto \cite{Matsumoto_96} and Endo \cite{Endo_signature_2000} (both aided by the results from Meyer \cite{Meyer_73}) and Smith \cite{Smith_Hodge_1999} 
The signature for the genus-2 case, from Theorem 3.3 of \cite{Matsumoto_96} and Theorem 4.8 of \cite{Endo_signature_2000}, is as follows,%
\begin{equation}{\label{eq:fractionsig}}
    \sigma(X) = - \frac{3}{5}n - \frac{1}{5}s 
\end{equation}%
where $n$ and $s$ are the number of separating and nonseparating vanishing cycles in the Lefschetz fibration. It is know a well-known result that %
\begin{equation}{\label{eq:ns_relationship}}
     n+12s \equiv 0 \mod(10),
\end{equation}%
which follows from the Abelianization of $Mod(\Sigma_2)$ being $\Z/{10}\Z$, see Section 5.1.3. of \cite{Farb_Margalit}. Each separating vanishing cycle on $\Sigma_2$ separates the surface into a pair of genus-1 surfaces. On the genus-1 surface with boundary, the length twelve chain relation implies that this separating curve can be written as the composition of twelve right-handed Dehn twists about nonseparating curves. The final relationship between $n$ and $s$ that we will mention is%
\begin{equation}{\label{eq:ns_bounds_BK}}
     n + 7s \geq 20,
\end{equation}%
see Lemma 5 of \cite{Baykur_Korkmaz_2017}. It also holds that knowing $n$ and $s$ is sufficient for calculating the Euler characteristic of a Lefschetz fibration. For genus-2, 
\begin{equation}{\label{eq:eulerchar}}
    \chi(X) = n + s -4
\end{equation}%
which can be found by observing how each vanishing cycle contributes to the handlebody description of the 4-manifold. That is, build the genus-$g$ Lefschetz fibration from $\Sigma_g \times D^2$ and glue in the vanishing cycles (which are 4-dimensional 2-handles glued along $S^1$ with framing $-1$, see Section 8.2 of \cite{Gompf_Stipsicz_1999}). The $\Sigma_g \times D^2$ will have one 0-handle, $2g$ 2-handles, and one 3-handle. There are $k$ 2-handles corresponding to the vanishing cycles. Then, regardless of if there is a section, glue the final $\Sigma_g \times D^2$ which caps off the handlebody (in a way that depends on the existence of a section). This capping $\Sigma_g \times D^2$ will have one 4-handle, $2g$ 3-handles, and one 2-handle. However, we are gluing along boundary $\Sigma_g \times S^1$ and want to avoid double counting as formalized in Mayer-Vietoris. However, $\chi(S^1)$ is zero, so $\chi(X) = 2\chi(\Sigma_g \times D^2) + k = 4-4g$. More details can be found in Section 8.2 of \cite{Gompf_Stipsicz_1999}.

In a Lefschetz fibration, a singular fiber is homotopy equivalent to a regular fiber with a 2-disk attached along its boundary $S^1$ to the corresponding vanishing cycle. The vanishing cycles play a crucial role in distinguishing different Lefschetz fibrations, particularly when computing their fundamental group or first homology. In general, knowing the number of vanishing cycles, even the number of separating and nonseparating vanishing cycles, is not always sufficient for differentiating between them. It can be interesting to study how contradistinct two Lefschetz fibrations of the same type $(n,s_1,s_2, ..., s_k)$ can be, where $n$ is the number of singular fibers and the $s_i$ are the number of separating vanishing cycles that separate $\Sigma_g$ into surfaces of genus-$i$ and genus-$(g-i)$. 

In the case of genus-2 Lefschetz fibrations, for example, it is known that a fibration of type $(4,3)$ has a unique diffeomorphism type, as it corresponds to a surface bundle that has been blown up a finite number of times \cite{Baykur_Korkmaz_2017}. The genus-2 Lefschetz fibrations of type $(6,2), (18,1)$, and $(20,0)$ also have total spaces whose diffeomorphism types are known. But, not all Lefschetz fibrations with genus-2 fibers are diffeomorphic to a ruled (complex) surface like these. In fact, Baykur-Korkmaz \cite{Baykur_Korkmaz_2017} show that if a genus-2 Lefschetz fibration of type $(n,s)$ satisfies $n+7s < 30$, then $X$ must be a rational or ruled, see Equation~\eqref{eq:ns_bounds_BK}. In general, there is no algorithm to decide if a given Lefschetz fibration of type $(n,s_1, s_2,...,s_k)$ exists, is unique, or is indecomposable.

\subsection{Indecomposability of Lefschetz fibrations}\label{Sec:intro_indecomp}
%
A Lefschetz fibration is said to be \textit{trivial} if it has no singular fibers, and \textit{nontrivial} otherwise. It is said to be \textit{decomposable} if it can be described as the fiber sum of nontrivial Lefschetz fibrations. The \textit{fiber sum} is an additive operation of two genus-$g$ Lefschetz fibrations that preserves the fiber and base directions. Suppose $f_1 : X_1 \rightarrow S^2$ and $f_2 : X_2 \rightarrow S^2$ are two nontrivial genus-$g$ Lefschetz fibrations. The fiber sum $X_1 \#_\phi X_2$ is defined by removing a regular neighborhood $C_i = \Sigma_g \times D^2$ from $X_i, i=1,2$ and gluing $X_1 - C_1$ to $X_2 - C_2$ by $\phi$, an orientation reversing diffeomorphism. A Lefschetz fibration is \textit{indecomposable} if whenever it is written as a fiber sum, one of the $X_i$ must always be a trivial Lefschetz fibration. 

\begin{figure}[!ht]\label{Fig:Lantern}
    \centering
    \begin{tikzpicture}[scale=2]%
    \draw[line width=.5mm] (0.25,0) to[in=170, out=10] (1.25,0);
    \draw[line width=.5mm] (0.25,1.5) to[in=190, out=-10] (1.25,1.5);
    \draw[line width=.5mm] (0,.25) to[in=-80, out=80] (0,1.25);
    \draw[line width=.5mm] (1.5,0.25) to[in=-100, out=100] (1.5,1.25);
    \draw[line width=.5mm] (0.25,0) to[in=45,out=45] (0,0.25);
    \draw[line width=.5mm] (0.25,0) to[in=-135,out=-135] (0,0.25);
    \draw[line width=.35mm, blue]        (0.45,0.05) to[in=45,out=45]    (0.05,0.45);
    \draw[line width=.35mm, blue,dashed] (0.45,0.05) to[in=-135,out=-135] (0.05,0.45);
    \draw[line width=.5mm] (1.25,0) to[in=-45,out=-45] (1.5,0.25);
    \draw[line width=.5mm] (1.25,0) to[in=135,out=135] (1.5,0.25);
    \draw[line width=.35mm, blue, dashed] (1.05,0.05) to[in=-45,out=-45] (1.45,0.45);
    \draw[line width=.35mm, blue]         (1.05,0.05) to[in=135,out=135]  (1.45,0.45);    
    \draw[line width=.5mm] (0,1.25) to[in=135,out=135] (.25,1.5);
    \draw[line width=.5mm] (0,1.25) to[in=-45,out=-45] (.25,1.5);
    \draw[line width=.35mm,blue,dashed] (0.05,1.05) to[in=135,out=135] (.45,1.45);
    \draw[line width=.35mm,blue]        (0.05,1.05) to[in=-45,out=-45] (.45,1.45);  
    \draw[line width=.5mm] (1.5,1.25) to[in=45,out=45] (1.25,1.5);
    \draw[line width=.5mm] (1.5,1.25) to[in=-135,out=-135] (1.25,1.5);
    \draw[line width=.35mm, blue,dashed] (1.45,1.05) to[in=45,out=45]    (1.05,1.45);
    \draw[line width=.35mm, blue]        (1.45,1.05) to[in=-135,out=-135] (1.05,1.45);    
    \draw[line width=.5mm] (02.25,0) to[in=170, out=10] (3.25,0);
    \draw[line width=.5mm] (02.25,1.5) to[in=190, out=-10] (3.25,1.5);
    \draw[line width=.5mm] (2,.25) to[in=-80, out=80] (02,1.25);
    \draw[line width=.5mm] (3.5,0.25) to[in=-100, out=100] (3.5,1.25);
    %
    \draw[line width=.5mm] (2.25,0) to[in=45,out=45] (2,0.25);
    \draw[line width=.5mm] (2.25,0) to[in=-135,out=-135] (2,0.25);
    %
    \draw[line width=.5mm] (3.25,0) to[in=-45,out=-45] (3.5,0.25);
    \draw[line width=.5mm] (3.25,0) to[in=135,out=135] (3.5,0.25);  
    %
    \draw[line width=.5mm] (2,1.25) to[in=135,out=135] (2.25,1.5);
    \draw[line width=.5mm] (2,1.25) to[in=-45,out=-45] (2.25,1.5); 
    %
    \draw[line width=.5mm] (3.5,1.25) to[in=45,out=45] (3.25,1.5);
    \draw[line width=.5mm] (3.5,1.25) to[in=-135,out=-135] (3.25,1.5);
    \draw[line width=.35mm,orange]        (02.75,0.05) to[in=-30,out=30] (2.75,1.45);
    \draw[line width=.35mm,orange,dashed] (02.75,0.05) to[in=-150,out=150] (2.75,1.45);
    \draw[line width=.35mm,cyan]        (02.05,0.75) to[in=120,out=60]   (3.45,0.75);
    \draw[line width=.35mm,cyan,dashed] (02.05,0.75) to[in=-120,out=-60] (3.45,0.75);
    \draw[line width=.35mm,violet]        (2.75,1.45)  to[out=180,in=90]  (02.05,0.75);
    \draw[line width=.35mm,violet,dashed] (02.05,0.75) to[out=-90,in=180] (02.75,0.05);
    \draw[line width=.35mm,violet]        (02.75,0.05) to[out=0,in=-90]   (3.45,0.75);
    \draw[line width=.35mm,violet,dashed] (3.45,0.75)  to[out=90,in=0]    (2.75,1.45);
    \node (c01) at (.56,1.2) {$\partial_1$};
    \node (c03) at (.56,0.35) {$\partial_2$};
    \node (c02) at (.95,1.2) {$\partial_3$};
    \node (c04) at (.95,0.35) {$\partial_4$};

    \node (c2) at (2.34,1.2) {$c_2$};
    \node (c3) at (2.8,1.0) {$c_3$};
    \node (c4) at (2.99,0.6) {$c_4$};
    \end{tikzpicture}
    \caption{Lantern relation, $\tau_{\partial_1} \tau_{\partial_2}\tau_{\partial_3} \tau_{\partial_4} = \tau_A \tau_B \tau_C$. Note that the separating curve depends on the embedding of the four-punctured sphere into the fiber surface.}
    \label{tikz_curvesab}
\end{figure}
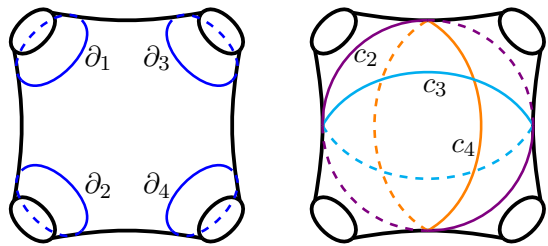%

Oftentimes, new Lefschetz fibrations are uncovered by taking commonly known chain relations as monodromies and altering them using Hurwitz moves (elementary transformations), braid relations, chain relation substitutions, and generalized lantern substitutions. Generalized lantern relations are generalizations of Figure~\ref{Fig:Lantern} with more boundary components. A reader is directed to \cite{Margalit_Winarski} for more information about these relations. Although the vanishing cycles on this possibly ``new" Lefschetz fibration can be somewhat complicated as curves, these techniques are still useful for finding particularly illusive Lefschetz fibrations, such as the type $(4,3)$ \cite{Baykur_Korkmaz_2017} and the type $(10,10)$ \cite{Nakamura_2018} genus-2 Lefschetz fibrations. It is important to note that a genus-2 lantern relation substitution will replace four nonseparating vanishing cycles with two different nonseparating and one separating vanishing cycles. For a genus-2 of type $(n,s)$, performing a lantern will alter it to a type $(n-2, s+1)$. For example, the genus-2 type $(10,10)$ Lefschetz fibration found by Nakamura \cite{Nakamura_2018} is found after fiber summing three copies of the $(4,3)$ and performing one lantern relation on that resulting type $(12,9)$. 

\section{Results on nontrivial Lefschetz fibrations}

For any pair of nontrivial Lefschetz fibrations with the same fiber genus, their monodromies capture their most distinguishing topological features. Many rich results about Lefschetz fibrations are derived using the least amount data extracted from the monodromy, since the problem of finding all positive factorizations of the identity of a mapping class group is complex. Equally as difficult would be computing all invariants of a nontrivial genus-$g$ Lefschetz fibration without explicitly identifying its vanishing cycles. 

In the hyperelliptic case, the task is significantly simplified, but for non-hyperelliptic Lefschetz fibrations, it is often the case that more sophisticated tools are required. For instance, Ozbagci provides an algorithm for computing the signature based on Wall’s non-additivity formula, which involves ordering the singular fibers and then computing the contribution to the signature of each singular fiber based on the previous singular fibers \cite{Ozbagci_signatures}. This is a difficult process for all but the simplest Lefschetz fibrations. Additionally, Smith offers an explicit formula for the signature that depends only on the number of vanishing cycles and the Euler number of the associated Hodge line bundle \cite{Smith_Hodge_1999}. In comparison, the proof of Theorem~\ref{ThmTransitivity} is noteworthy because it does not require exact knowledge of the vanishing cycles. 

Recall that Theorem~\ref{ThmTransitivity} says that any genus-$g$ Lefschetz fibration with transitive monodromy is simply connected. The proof does not rely on the underlying structure of the symplectic 4-manifold, but rather the geometric consequences of this extra structure on the monodromy with respect to the mapping class group of the fiber.%
\begin{proof}[Proof of Theorem~\ref{ThmTransitivity}]

    Recall the construction of a Lefschetz fibration $f: X \rightarrow S^2$ from Section~\ref{Sec:hyperelliptic}. This handlebody description of $X$ implies that the fundamental group is a quotient of the fundamental group of $\Sigma_g$ by relationship corresponding to the vanishing cycles. In the case that $f$ has no section, there is possibly another relation coming from the gluing the capping $\Sigma_g \times D^2$ to the base $\Sigma_g \times D^2$. We see that if we can show that every embedded simple closed curve on a regular fiber of the Lefschetz fibration is nullhomotopic, then it follows that $X$ is simply connected. 

    Recall the global monodromy of the Lefschetz fibration
    \begin{equation*}
        m_f: \pi_1 \left (S^2-\{q_1, \ldots, q_k \}, x_0 \right ) \rightarrow Mod(\Sigma_g).
    \end{equation*}
    discussed in Section~\ref{Sec:hyperelliptic}. Suppose $\eta$ is a nonseparating vanishing cycle on $F_p$, a regular fiber. Then for any nonseparating simple closed curve $\gamma$ on $F_p$, by transitivity, there is a (possibly immersed) closed curve $\delta$ in $S^2-\{q_1, \ldots, q_k \}$ such that $m(\delta)$ sends $\gamma$ to a curve isotopic to $\eta$. 
    
    Think of $\delta$ as the map $\delta:[0,1]\rightarrow X$ and pull-back $X$ to $[0,1]$ to get a trivial $\Sigma_g$-bundle over $[0,1]$. Now consider $\eta \in \Sigma_g\times\{1\}$ and let $A=\eta\times[0,1]$. The image of $A$ under the map from $\delta(X)$ to $X$ is an immersed annulus with one boundary component $\eta$ and the other isotopic to $\gamma$. Now, adding the disk, the vanishing cycles bounds to this annulus will give an immersed disk with boundary $\gamma$. So $\gamma$ is nullhomotopic. The same argument is made for every nonseparating curve on $\Sigma_g$, therefore the 4-manifold is simply connected. %
\end{proof}%

Recall the conjecture that for genus-2 Lefschetz fibrations, the fundamental group must have at most two generators. It is possible to say more about this question in the case of the Abelianization of the fundamental group. To do so, we need to understand the relationship between the algebraic intersection number of curves on the surface $\Sigma_g$ and the first homology, $H_1(\Sigma_g;\Z)$, viewed as a symplectic vector space. 

Let $\{ [a_1], [b_1], ..., [a_g], [b_g]\}$ be an ordered basis for $H_1 \left(\Sigma_g; \Z \right)$ such that the algebraic intersection between the curves satisfies $\hat{i}{([a_i], [b_j])}$ is $-1$ if and only if $i=j$ and all other intersection numbers are zero. Note that $\hat{i}{([b_j], [a_i])}$ is $1$. Here \[ \hat{i}:H_1(\Sigma_g;\Z)\times H_1(\Sigma_g;\Z)\to \Z \] is the algebraic intersection number between homology classes. Thus $H_1(\Sigma_g;\Z)$ is a symplectic vector space. Since any diffeomorphism of $\Sigma_g$ must preserve the intersection number between curves, it is an automorphism of the symplectic vector space $H_1(\Sigma_g;\Z)$. Thus any diffeomorphism gives an element in the symplectic group $Sp(2g,\Z)$. Therefore, we have a symplectic representation
\[ \Psi: Mod \left(\Sigma_g \right) \rightarrow Sp(2g; \Z),\] see \cite{Farb_Margalit}. The following fact will be useful. 
\begin{fact}\label{sympl_rep_thm}
    For isotopy classes of oriented simple closed curves $a$ and $b$ in $\Sigma_g$, and for all $k \geq 0$, $\Psi\left( \tau_b^k \right) \left( [a] \right) = [a] + k \cdot \hat{i}(a,b)[b]$.
\end{fact}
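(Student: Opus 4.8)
The plan is to reduce everything to the single-twist case and then iterate. Since $\Psi$ is a group homomorphism, $\Psi(\tau_b^k) = \Psi(\tau_b)^k$, so it suffices to compute $\Psi(\tau_b)$ on homology and then apply it $k$ times. I would prove the $k=1$ identity geometrically (this is the Picard--Lefschetz formula), observe that the error term $[b]$ is itself fixed by $\tau_b$, and then finish by an elementary induction. The cases $k=0$ (where $\tau_b^0$ is the identity and both sides equal $[a]$) and $k=1$ serve as the base.

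First I would establish $\Psi(\tau_b)([a]) = [a] + \hat{i}(a,b)[b]$. After isotoping $a$ and $b$ into transverse, minimal position, the twist $\tau_b$ is supported in a thin annular neighborhood $N(b)$ of $b$, so it leaves $a$ unchanged outside $N(b)$ while dragging each of the $|a \cap b|$ strands of $a$ once around $b$ inside $N(b)$. Reading off the resulting curve, near each intersection point $a$ acquires a parallel copy of $b$ oriented according to the sign of that crossing; passing to homology these contributions sum to $\hat{i}(a,b)[b]$, which yields the base case. I would then induct on $k$, using the crucial fact that the algebraic intersection form is alternating, so $\hat{i}(b,b)=0$ and hence $\Psi(\tau_b)([b]) = [b] + \hat{i}(b,b)[b] = [b]$. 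Assuming the formula for $k$, linearity of $\Psi(\tau_b)$ gives
\begin{align*}
\Psi(\tau_b^{k+1})([a]) &= \Psi(\tau_b)\big([a] + k\,\hat{i}(a,b)[b]\big) \\
&= \big([a] + \hat{i}(a,b)[b]\big) + k\,\hat{i}(a,b)\,[b] \\
&= [a] + (k+1)\,\hat{i}(a,b)[b],
\end{align*}
completing the induction.

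The main obstacle is the base case. The inductive step is purely formal once $\hat{i}(b,b)=0$ is in hand, but the geometric claim that each signed crossing of $a$ with $b$ contributes exactly one copy of $[b]$ to $[\tau_b(a)]$ demands care with orientation conventions and with the choice of minimal-position representatives, so that the crossing count genuinely computes the algebraic intersection number $\hat{i}(a,b)$ and not merely the geometric intersection number. Getting the sign to match the convention fixed earlier (namely $\hat{i}([a_i],[b_j]) = -1$ when $i=j$) is precisely where the bookkeeping must be checked.
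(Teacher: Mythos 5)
The paper never proves this statement: it is recorded as a Fact, with the surrounding text deferring to Farb--Margalit (where it is Proposition 6.3), so there is no internal proof to compare yours against. Your argument is correct and is essentially the standard one: establish the $k=1$ case from the local picture of a twist acting on minimal-position representatives, then run a formal induction using linearity of $\Psi(\tau_b)$ and the fact that $[b]$ is fixed. Two small refinements: $\Psi(\tau_b)([b])=[b]$ is immediate because $\tau_b(b)$ is isotopic to $b$ as a curve, which lets you sidestep applying the geometric base case to the non-transverse pair $a=b$ (where ``minimal position'' needs a word); and your caution about signs is warranted, since the paper's convention $\hat{i}([a_i],[b_i])=-1$ is opposite to the usual one --- though the paper's only use of the Fact, deriving a contradiction from $s\cdot\hat{i}(a,\eta)\neq 0$, is insensitive to that sign.
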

We are now ready to prove Theorem~\ref{ThmBettinum}. Recall it says that for $X$ a nontrivial genus-$g$ Lefschetz fibration, $b_1(X) \leq 2g-2$.

\begin{proof}[Proof of Theorem~\ref{ThmBettinum}]
    Recall the handlebody construction of a Lefschetz fibration $f: X \rightarrow S^2$ as seen in Section~\ref{Sec:hyperelliptic}. Only the closed surface is producing any nontrivial first homology in $\Sigma_g \times D^2$. The homology $H_1(X)$ is a finitely presented group generated by the homological basis elements of $\Sigma_g$, call them $\{ [a_1], [b_1], [a_2], [b_2], \ldots, [a_g], [b_g] \}$. The 2-handles attached along the vanishing cycles are some integral linear combination of these homology classes and will be relations in the presentation. If there is no section, there may be another relation. So, when computing the first homology of a Lefschetz fibration, it is enough to know what the vanishing cycles are (and possibly the section).

    Due to Smith \cite{Smith_Hodge_1999}, it is known that every nontrivial genus-$g$ Lefschetz fibration has at least one nonseparating vanishing cycle. (They show this in their proof of Theorem 6.2 which claims that there are no Lefschetz fibrations with monodromy group a subgroup of the Torelli group.) Label the single nonseparating vanishing cycle $\eta$. The first Betti number of $X$ will now be at most $2g-1$ after identifying $\eta$ with a basis element of the first homology. 

    Assume for contradiction that all remaining vanishing cycles, $\eta_1, \eta_2, ..., \eta_{s-1}$, are all simple closed curves in the same homology class as $[\eta]$ (Note that for $s=1$, the following argument still holds). This does not change the fact that $b_1(X) = 2g-1$. Then, Remark~\ref{sympl_rep_thm} implies that for every class of curves $[a]$ on the surface,
    \[ [a] = \Psi \left( Id \right) \left( [a] \right) = \Psi \left ( \tau_{\eta_s} ... \tau_{\eta_2} \tau_{\eta_1} \right ) \left ( [a] \right) = [a] + s \cdot \hat{i}(a,\eta)[\eta] \]
    However, there are classes of curves on the surface such that $s \neq 0$ and so it must be true that $\hat{i}(a,\eta) \neq 0$. Thus this equality is not always true like we require it to be. Therefore, there must be at least one other vanishing cycle realizing a homology class distinct from $[\eta]$ which aides in taking a homological class of curves back to itself. 
\end{proof}

\begin{remark}
    Stipsicz \cite{Stipsicz_99} improved Smith's result \cite{Smith_Hodge_1999} to at least $\frac{4g+2}{5}$ vanishing cycles but does not comment on the possible homological behavior of the curves. Of these $\frac{4g+2}{5}$ vanishing cycles, Theorem~\ref{ThmBettinum} states that at least two of them are nonhomologous and nonseparating.
\end{remark}

This theorem has a similar statement for Lefschetz fibrations over higher genera surfaces, $f:\Sigma_g \rightarrow \Sigma_h$.  

\begin{theorem}\label{Thm:GeneralizedBettinum}
    For a Lefschetz fibration $X$ over a higher genera base space, $f : X \rightarrow \Sigma_h$, the inequality on the first Betti number of $X$ is as follows. 
    \[ 0 \leq b_1(X) \leq 2g + 2h -2\]
\end{theorem}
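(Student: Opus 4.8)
The plan is to re-run the homological argument behind Theorem~\ref{ThmBettinum}, enlarged to record the homology of the base. Generalizing the handlebody description of Section~\ref{Sec:hyperelliptic}, the total space $X$ is built from the $\Sigma_g$-bundle over $\Sigma_h$ minus the critical values together with the Lefschetz thimbles, so $H_1(X;\Q)$ is generated by the $2g$ fiber classes $\{[a_i],[b_i]\}$ together with $2h$ ``horizontal'' classes obtained by lifting a symplectic basis of $H_1(\Sigma_h;\Q)$ over the $1$-skeleton of the base. The relations are: each vanishing cycle class $[\eta_\ell]$ is killed; each base monodromy $\phi$ forces $[v]=\Psi(\phi)[v]$ on fiber classes; and the surface relation, being a product of commutators, dies after abelianizing. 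Since the fiber classes in $H_1(X;\Q)$ already satisfy all of these relations, the subgroup they generate is a quotient of the coinvariants $V_\Gamma := H_1(\Sigma_g;\Q)/R$, where $\Gamma\subseteq Sp(2g;\Z)$ is the image under $\Psi$ of the full monodromy group (generated by the twists $\tau_{\eta_\ell}$ and the $2h$ base monodromies) and $R=\langle [\eta_\ell]\rangle+\sum_{\phi}\operatorname{im}(\Psi(\phi)-I)$. Hence
\[ 0\le b_1(X)\le 2h+\dim_\Q V_\Gamma, \]
and everything reduces to proving $\dim_\Q V_\Gamma\le 2g-2$, equivalently $\dim_\Q R\ge 2$.

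Next I would bound the fiber contribution exactly as in the proof of Theorem~\ref{ThmBettinum}, feeding Fact~\ref{sympl_rep_thm} into the \emph{global} relation $\prod_{i=1}^h[\phi_{\alpha_i},\phi_{\beta_i}]\,\prod_\ell\tau_{\eta_\ell}=\mathrm{id}$ in $Mod(\Sigma_g)$. For a nonseparating $\eta_\ell$, Fact~\ref{sympl_rep_thm} gives $[\eta_\ell]\in R$, so every nonseparating vanishing-cycle class dies in $V_\Gamma$; if two of these are linearly independent we are done. The remaining case is $\dim_\Q R\le 1$, say $R=\Q u$. Then each generator $\gamma$ of $\Gamma$ satisfies $\operatorname{im}(\gamma-I)\subseteq\Q u$, and a symplectic map with this property is a transvection along $u$ (or the identity). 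Transvections along a fixed $u$ commute in $Sp(2g;\Q)$, so the commutators in the global relation act trivially on $H_1(\Sigma_g;\Q)$; applying $\Psi$ and Fact~\ref{sympl_rep_thm}, the relation collapses to $T^{\,n}=I$, where $T$ is the transvection $v\mapsto v+\hat i(v,u)\,u$ and $n$ is the number of nonseparating vanishing cycles. Since some class meets $u$, this forces $n=0$: every vanishing cycle is separating.

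The crux, and the main obstacle, is therefore to rule out a nontrivial Lefschetz fibration over $\Sigma_h$ all of whose vanishing cycles are separating and whose monodromy group fixes a subspace of codimension at most one (equivalently $\Gamma$ lies in the Torelli group, or acts by transvections along a single class). When $h=0$ this is precisely what Theorem~\ref{ThmBettinum} used: Smith's result \cite{Smith_Hodge_1999} that no nontrivial Lefschetz fibration has monodromy group inside the Torelli group, and over $S^2$ the transvection case degenerates back to the Torelli case via the same $T^{\,n}=I$ computation, since there the global relation is just $\prod_\ell\tau_{\eta_\ell}=\mathrm{id}$. For $h\ge 1$ the global relation carries genuine commutator terms that vanish only homologically, so neither degeneration is automatic, and this is where the argument must do real work: one wants an extension of Smith's signature/Hodge-bundle obstruction to positive-genus base, guaranteeing at least one---and then, by the transvection computation above, at least two homologically independent---nonseparating vanishing cycles. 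Granting such a nonexistence statement, $\dim_\Q R\ge 2$ follows and the bound $b_1(X)\le 2g+2h-2$ is established; the lower bound $b_1(X)\ge 0$ is immediate.
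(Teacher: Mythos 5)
Your route is genuinely different from the paper's. The paper does not re-run the homological argument: it decomposes $f\colon X \to \Sigma_h$ as a fiber sum of a Lefschetz fibration $M_1 \to S^2$ carrying all of the vanishing cycles with a $\Sigma_g$-bundle $M_2 \to \Sigma_h$, then quotes Theorem~\ref{ThmBettinum} for the fiber-direction contribution ($\le 2g-2$) and adds $2h$ for the base. Within your approach, the coinvariants setup, the bound $b_1(X)\le 2h+\dim_\Q V_\Gamma$, and the reduction of the case $\dim_\Q R\le 1$ to ``all vanishing cycles separating, all monodromies acting trivially or by transvections along a single class $u$'' are correct applications of Fact~\ref{sympl_rep_thm} and symplectic linear algebra.

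The gap is the step you yourself flag and then ``grant'': the nonexistence of a nontrivial Lefschetz fibration over $\Sigma_h$, $h\ge 1$, all of whose vanishing cycles are separating and whose monodromy group acts on $H_1(\Sigma_g;\Q)$ trivially or by transvections along one class. This is not a removable technicality, because that statement is false for $g\ge 3$. By Johnson's computation of the abelianization of the Torelli group $\mathcal{I}_g$ ($g\ge 3$), the class of a separating twist $\tau_\eta$ is nonzero but $2$-torsion, so $\tau_\eta^2\in[\mathcal{I}_g,\mathcal{I}_g]$; a factorization $\tau_\eta^2=\prod_{i=1}^h[\alpha_i,\beta_i]$ with all $\alpha_i,\beta_i\in\mathcal{I}_g$ is exactly the monodromy of a nontrivial, relatively minimal genus-$g$ Lefschetz fibration over some $\Sigma_h$ with two separating vanishing cycles and with $R=0$, hence with $b_1(X)\ge 2g+2h-1$, exceeding the bound you are trying to prove. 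So the case you need to exclude actually occurs: no extension of Smith's Torelli obstruction to positive-genus base can exist, and your argument---which correctly disposes of every other case---cannot be completed in the stated generality; it becomes a proof only under an added hypothesis (for instance, two homologically independent nonseparating vanishing cycles, or $g=2$, where $\mathcal{I}_2$ is free and this construction is unavailable). For what it is worth, the same examples show that the paper's fiber-sum decomposition is not available in general either, since it tacitly assumes the vanishing-cycle twists alone multiply to the identity in $Mod(\Sigma_g)$; you have isolated a genuine difficulty, but as submitted your proposal is a conditional proof, not a proof.
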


\begin{proof}
    The above proof immediately generalizes to prove this, but we give a different argument here. 
    
    The Lefschetz fibration structure on $X$ with base space $\Sigma_h$ can be thought of as the fiber sum of $M_1$ and $M_2$, where $M_1$ is the Lefschetz fibration $\widehat{f}: M_1 \rightarrow S^2$ with the same vanishing cycles as $f:X \rightarrow \Sigma_h$ and $M_2$ is a $\Sigma_h$-bundle over $\Sigma_g$. Then, one can see that the rank of first homology from the fiber direction is unchanged, and the base space has increased homology coming from the addition of the new handles on the surface. 
\end{proof}

Recall that for genus-$2$ Lefschetz fibrations over the sphere, this bound is sharp. There exist Lefschetz fibrations that realize $b_1(X) =0,1,$ and $2$, see \cite{Baykur_Korkmaz_2017, Amoros_ABKP_99, Korkmaz2009, Ozbagci_Stipsicz_2000}. The current conjecture for fundamental groups of genus-2 Lefschetz fibrations is that they are at most rank two and Abelian. For nontrivial genus-2 Lefschetz fibrations with $b_1(X)=2$, if there are two homology classes of vanishing cycles $[\eta_1]$ and $[\eta_2]$ which are a part of an integral basis for homology, then it must be true that the first homology is a quotient of $\Z^2$. In this case, the equivalence relation brought on by $[\eta_1]$ and $[\eta_2]$ will force at most two generators of the Abelianization of the fundamental group. This is a restatement of Problem~\ref{Prob:hombasis}.

\section{Applications to genus-2 Lefschetz fibrations}\label{sec:genus_2}

Hyperelliptic Lefschetz fibrations are a well-structured subset of symplectic 4-manifolds. Specifying the quantity of nonseparating and separating curves that make up the vanishing cycles is enough to compute most invariants such as the Euler characteristic and signature, or inequalities between them, see \cite{Altunoz_2020, Baykur_Korkmaz_2017,  Smith_Hodge_1999}. All genus-2 Lefschetz fibrations have vanishing cycles which are hyperelliptic, and the results in this section rely on this fact.

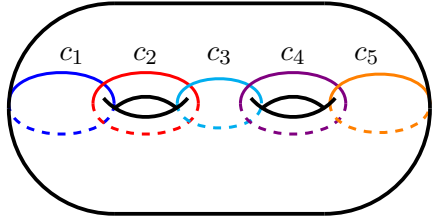
\begin{figure}[h!]
    \centering
   \begin{tikzpicture}[scale=1.4]
    \draw[line width=.4mm, blue] (0,1.05) to[in=90, out=90] (1,1.05);
    \draw[line width=.4mm, red] (.8,1.05) to[in=90, out=90] (1.8,1.05);
    \draw[line width=.4mm, cyan] (1.6,1.05) to[in=90, out=90] (2.4,1.05);
    \draw[line width=.4mm, violet] (2.2,1.05) to[in=90, out=90] (3.2,1.05);
    \draw[line width=.4mm, orange] (3.05,1.05) to[in=90, out=90] (4,1.05);
    \draw[line width=.4mm, blue, dashed] (0,1.05) to[in=-90, out=-90] (1,1.05);
    \draw[line width=.4mm, red, dashed] (.8,1.05) to[in=-90, out=-90] (1.8,1.05);
    \draw[line width=.4mm, cyan, dashed] (1.6,1.05) to[in=-90, out=-90] (2.4,1.05);
    \draw[line width=.4mm, violet,dashed] (2.2,1.05) to[in=-90, out=-90] (3.2,1.05);
    \draw[line width=.4mm, orange, dashed] (3.05,1.05) to[in=-90, out=-90] (4,1.05);

    \draw[line width=.5mm] (0,1) to[in=180, out=-90] (1,0);
    \draw[line width=.5mm] (0,1) to[in=180, out=90] (1,2);
    \draw[line width=.5mm] (1,0) to[in=180, out=0] (3,0);
    \draw[line width=.5mm] (1,2) to[in=180, out=0] (3,2);
    \draw[line width=.5mm] (4,1) to[in=0, out=90] (3,2);
    \draw[line width=.5mm] (4,1) to[in=0, out=-90] (3,0);
    \draw[line width=.5mm] (1,1) to[in=140, out=40] (1.6,1);
    \draw[line width=.5mm] (.9,1.1) to[in=-130, out=-50] (1.7,1.1);
    \draw[line width=.5mm] (2.4,1) to[in=140, out=40] (3,1);
    \draw[line width=.5mm] (2.3,1.1) to[in=-130, out=-50] (3.1,1.1);
    \node (c1) at (.6,1.5) {$c_1$};
    \node (c2) at (1.3,1.5) {$c_2$};
    \node (c3) at (2,1.5) {$c_3$};
    \node (c4) at (2.7,1.5) {$c_4$};
    \node (c5) at (3.4,1.5) {$c_5$};
       \end{tikzpicture}
    \label{fig:chainrltn}
    \caption{The standard curves on the genus-2 surface.}
\end{figure}

Recall that a \textit{K\"{a}hler manifold} is a 4-manifold $X$ with a symplectic structure $\omega$ and an integrable complex structure $J$ such that $g(u,v) = \omega(u,Jv)$ for $u,v \in TX$ and $J: TX \rightarrow TX$ and $J^2=-1$. Note that a symplectic 4-manifold comes equipped with an almost-complex structure. Lemma~\ref{ThmHolomorphic} combines a result of Smith and Chakiris with that of Siebert and Tian to say that in the case of only nonseparating vanishing cycles, a nontrivial genus-2 Lefschetz fibration is holomorphic (or K\"{a}hler) if and only if its monodromy is transitive. That is, Chakiris, and then later Smith, shows that nontrivial genus-2 Lefschetz fibrations with only nonseparating vanishing cycles must have monodromy a composition of the following three relations: %
\begin{equation}
    \begin{split}
        \alpha &= (\tau_1 \tau_2 \tau_3 \tau_4 \tau_5)^6 \\
        \beta &= (\tau_1 \tau_2 \tau_3 \tau_4)^{10} \\
        \gamma &= (\tau_1 \tau_2 \tau_3 \tau_4 \tau_5 \tau_5 \tau_4 \tau_3 \tau_2 \tau_1 )^2
    \end{split}
\end{equation}%
where $\tau_i$ is the right-handed Dehn twist about the curve $c_i$ pictured in Figure~\ref{fig:chainrltn}. 

In the setting of having a holomorphic structure and only nonseparating curves in the monodromy (which has to be made up of $\alpha, \beta, $and $\gamma$), it is true that the monodromy will be transitive. The curves in Figure~\ref{fig:chainrltn} are also a part of the Humphries generators for the mapping class group $Mod(\Sigma_2)$. Theorem~\ref{ThmTransitivity} states that these holomorphic nontrivial genus-2 Lefschetz fibrations with only nonseparating vanishing cycles are always simply connected. 

Now we prove Theorem~\ref{ThmBound_ns}. Baykur and Korkmaz \cite{Baykur_Korkmaz_2017}  combine previously known inequalities relating $n$ and $s$, the number of nonseparating and separating vanishing cycles in a genus-2 Lefschetz fibration respectively, to recover the following inequality. %
\begin{equation}
    2n - s \geq 3
\end{equation} %
This comes from Equation 4 in Lemma 5 of \cite{Baykur_Korkmaz_2017} but can be improved by substituting $b_1 \leq 3$ to the sharper bound on the first Betti number from Theorem~\ref{ThmBettinum}. 

\begin{equation}\label{eq_sharpline}
\begin{split}
        b_1(X) & \geq 4 - \frac{2}{5}(2n-s) \\
        2    & \geq 4 - \frac{2}{5}(2n-s) \\
        2n-s & \geq 5
\end{split}
\end{equation}

The first line of Equation~\eqref{eq_sharpline} comes from combining Equation~\eqref{eq:eulerchar} and a relation between $n$, $s$, and $b^-(X)$ motivated by the subspace in the second homology of the manifold generated by embedded tori found in singular fibers, see Equation 3 in Lemma 5 of \cite{Baykur_Korkmaz_2017}. This improvement on the first Betti number geometrically moves the bound uncovered by Baykur and Korkmaz to intersect actual $(n,s)$ coordinates that satisfy $n + 12s \equiv 0 (\text{mod }10)$, Equation~\eqref{eq:ns_relationship}. 

Baykur and Korkmaz's line is then moved to the line that contains the $(2k, 4k-5)$ family of Lefschetz fibrations, of which only the monodromy of the type $(4,3)$ has been found. This line shift is visualized in Figure~\ref{fig:graph}. The Lefschetz fibrations on the line $2n - s = 5$ have $b_1 \geq 2$, and therefore $b_1 =2$ due to Theorem~\ref{ThmBettinum}. More can be found about the smallest genus-2 Lefschetz fibration of type $(4,3)$ here \cite{Baykur_Korkmaz_2017}. It has fundamental group $\Z^2$ and it found by performing a sequence of braid relations, Hurwitz moves, and chain relation substitutions. The smallest genus-2 Lefschetz fibration was also found indepedently by Xiao in \cite{Xiao_85}.

\begin{proof}[\bf Theorem~\ref{ThmBound_ns}]{\em
Suppose $f:X\rightarrow S^2$ is a genus-2 Lefschetz fibration with $n$ nonseparating and $s$ separating vanishing cycles. Then $2n-s \leq 5$ and the family of Lefschetz fibrations of type $(n,s)=(2k, 4k-5), k\geq 2$ is indecomposable, has $b_1 = 2$ for all $k$, and has $b_2 = n+s-2$. \phantom\qedhere }
\end{proof}

\begin{proof}
    Theorem~\ref{ThmBettinum} states that $b_1(X) < 3$ for nontrivial genus-2 Lefschetz fibrations. 

    Suppose $b_1(X) = 1$. Then, using Equation~\eqref{eq:fractionsig} and Equation~\eqref{eq:eulerchar}, we see the following.
    \begin{equation}
        \begin{split}
            b_2^+(X) &= \frac{1}{2}\Big(\chi(X) + \sigma(X)\Big) + b_1(X) -1 \\
            &= \frac{1}{2}\left ( n + s - 4 - \frac{3}{5}n - \frac{1}{5}s \right) + 1 -1\\
            &= \frac{1}{5}n + \frac{2}{5}s -2
        \end{split}
    \end{equation}

    Recall, $\sigma(X) = b_2^+(X) - b_2^-(X)$ and therefore,
    \begin{equation}
        \begin{split}
            b_2^-(X) &= b_2^+ - (- \frac{3}{5}n - \frac{1}{5}s )\\
            &= \frac{1}{5}n + \frac{2}{5}s - 2 + \frac{3}{5}n + \frac{1}{5}s \\
            &= \frac{4}{5}n + \frac{3}{5}s - 2.
        \end{split}
    \end{equation}
    Since we consider only those Lefschetz fibrations sitting on the line $2n-s=5$,
    \begin{equation}
        \begin{split}
            b_2^-(X) &= \frac{2}{5}(s+5) + \frac{3}{5}s - 2 \\
            &= s.
        \end{split}
    \end{equation}
    However, it is known \cite{Baykur_Korkmaz_2017} that each separating vanishing cycle contributes to $H_2(X)$. In fact, $b_2^-(X) \geq s + 1$, see Equation 2 in Lemma 5 of \cite{Baykur_Korkmaz_2017}. So this is a contradiction. A similar contradiction ($b_2^- = s-1)$ is found when $b_1(X) =0$, so therefore $b_1(X) \geq 2$. Thus, $b_1(X) =2$ for $X$ in this family.

    All $(n,s)$ coordinates on this line satisfy the three conditions listed in Equation~\eqref{eq:fractionsig}, Equation~\eqref{eq:ns_relationship}, and Equation~\eqref{eq:ns_bounds_BK}. These Lefschetz fibrations are indecomposabe (if they exist) because there is no way to fiber sum a Lefschetz fibration $X_1$ of type $(n_1, s_2)$ to $X_2$ of type $(n_2, s_2)$ such that $(n_1 + n_2, s_1 + s_2)$ is a vector with a steeper slope than its summand parts. 

    Finally, we remark that for Lefschetz fibrations in this family with type $(n,s)$ as in the statement of the Theorem, $b_2^+(X) = n-3$, $b_2^-(X)=2n-4$, and therefore $b_2(X)=3n-7$, $n + s-2$. We begin with the same equation from above to show $b_2^+$ and the remaining terms follow. 

    \begin{equation}
        \begin{split}
            b_2^+(X) &= \frac{1}{2}\Big(\chi(X) + \sigma(X)\Big) + b_1(X) -1 \\
            &= \frac{1}{2}\left ( n + s - 4 - \frac{3}{5}n - \frac{1}{5}s \right) + 2 -1\\
            &= \frac{1}{5}n + \frac{2}{5}s -1\\
            &= \frac{1}{5}n + \frac{2}{5}\left( 2n-5\right) -1 \\
            &= n-3
        \end{split}
    \end{equation}
\end{proof}

In the following plot, possible genus-2 Lefschetz fibrations are marked, with filled in marks, {\small{$\newmoon$}}, for genus-2 Lefschetz fibrations whose monodromies have been found. Note that there are no nontrivial genus-2 Lefschetz fibrations with monodromies of length less than 7, and none of length 9, 10, 11, or 12. This can be seen using Equation~\eqref{eq:ns_relationship} and Equation~\eqref{eq_sharpline}. Note that the monodromy of length 13 with type $(6,7)$ has not yet been found. 

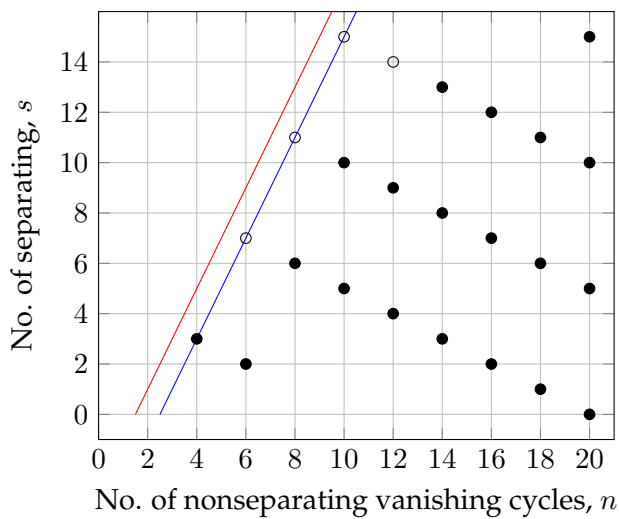
\begin{figure}[!ht]
    \centering
  \begin{tikzpicture}
    \begin{axis}[%
        scatter/classes={%
        b={mark=o,draw=black}, a={draw=black}},
        xmin=0, xmax=21,
        ymin=-1, ymax=16,
        xtick={0,2,4,6,8,10,12,14,16,18,20},
        ytick={0,2,4,6,8,10,12,14},
        xlabel = {No. of nonseparating vanishing cycles, $n$},
        ylabel = {No. of separating, $s$},
        xmajorgrids=true,
        ymajorgrids=true
    ]
    \addplot [
        domain=1.5:10,
        samples=100, 
        color=red,
    ]
    {2*x-3};
    \addplot [
        domain=2.5:11, 
        samples=100, 
        color=blue,
    ]
    {2*x-5};
    \addplot[only marks, mark=*] coordinates { (4,3) (6,2) (8,6) (10,5) (10,10)
  (12,4) (12,9) (14,3) (14,8) (14,13) (16,2) (16,7) (16,12)
  (18,1) (18,6) (18,11) (20,0) (20,5) (20,10) (20,15) };

    \addplot[only marks, mark=o] coordinates { (6,7) (8,11) (10,15) (12,14)
    };
    \addlegendimage{mark=ball,draw=white}
    \addlegendimage{}
    \end{axis}
  \end{tikzpicture}
\caption{Genus-2 Lefschetz fibrations of type $(n,s)$. Red line is $2n-s=3$, Blue line is $2n-s=5$, the mark {\small{$\newmoon$}} indicates a type $(n,s)$ Lefschetz fibration whose monodromy has been explicitly found, the mark {\small{$\fullmoon$}} indicates a possible $(n,s)$ whose monodromy has not been found.}
    \label{fig:graph}
\end{figure}

Here we discuss the indecomposable Lefschetz fibrations in Figure~\ref{fig:graph} which have been found.  
\begin{itemize}
    \item The type $(6,2)$ Lefschetz fibration is Matsumoto's genus-2 relation found by splitting singular fibers using Mathematica \cite{Matsumoto_2004}. 
    \item The type $(4,3)$ is found by Baykur and Korkmaz, who show it is the smallest monodromy for any genus-2 Lefschetz fibration \cite{Baykur_Korkmaz_2017}.
    \item The all nonseparating $(20,0)$ Lefschetz fibration has monodromy the square of the hyperelliptic involution $(\tau_{5}\tau_{4}\tau_{3}\tau_{2}\tau_{1}\tau_{1}\tau_{2}\tau_{3}\tau_{4}\tau_{5})^2 =1$ which can be found in Chapter 5 of \cite{Farb_Margalit}. See Figure~\ref{fig:chainrltn} for the curves $c_i, i=1,\ldots,5$.
    \item From the $(20,0)$, one or two lantern relations (which replace four nonseparating vanishing cycles with two nonseparating and one separating) can be performed to recover, respectively, the type $(18,1)$ and type $(16,2)$ Lefschetz fibration. 
    \item The type $(14,3)$ is found by Endo and Gurtas in \cite{Endo_Gurtas} in Example 5.3. They perform four lantern relations on the $(20,0)$ hyperelliptic involution to recover an explicit monodromy of $(12,4)$, but by omitting the last relation they have shown that $(14,3)$ exists. We remark that $(12,4)$ can also be recovered as the fiber sum of two copies of Matsumoto's $(6,2)$ Lefschetz fibration.  
\end{itemize}
The remaining of the filled in marks can be written as fiber sums of copies of $(6,2)$ and/or $(4,3)$ with the indecomposable Lefschetz fibrations listed just above. A coordinate of particular interest is that of type $(10,10)$ uncovered by Nakamura who fiber sums three copies of the $(4,3)$ with itself to get a type $(12,9)$. They then perform one lantern substitution after a sequence of Hurwitz moves which then recovers the $(10,10)$ Lefschetz fibration. Nakamura shows the curves which are replaced in the lantern relation and argues that this Lefschetz fibration is on a simply connected, minimal symplectic 4-manifold \cite{Nakamura_2018}. We remark that the Lefschetz fibrations of type $(14,13)$ is discussed by Akhmedov and Monden in \cite{Akhmedov_Monden_21}. 

It is often interesting to consider Lefschetz fibrations with small $b_2^+$. For example, Stipsicz \cite{Stipsicz_2002} considers genus-$g$ Lefschetz fibrations with $b_2^+(X) =1$ as they are the likely the best candidates for finding the genus-$g$ Lefschetz fibration with the fewest number of singular fibers. 

\begin{lemma}\label{Lem:b2+}
    There are only nine types of nontrivial genus-2 Lefschetz fibrations with $b_2^+ = 1$ and of those nine, seven must have $b_1(X) = 0$. 
\end{lemma}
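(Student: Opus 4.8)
The plan is to combine the signature and Euler characteristic formulas for hyperelliptic genus-2 fibrations with the constraint $b_2^+ = 1$ to pin down the allowable $(n,s)$ types. First I would recall the standard identities from Section~\ref{Sec:hyperelliptic}: the fractional signature $\sigma(X) = -\frac{3}{5}n - \frac{1}{5}s$ from Equation~\eqref{eq:fractionsig}, the Euler characteristic $\chi(X) = n + s - 4$ from Equation~\eqref{eq:eulerchar}, and the divisibility constraint $n + 12s \equiv 0 \pmod{10}$ from Equation~\eqref{eq:ns_relationship}. Using the relation $\chi(X) = 2 - 2b_1(X) + b_2(X)$ together with $b_2 = b_2^+ + b_2^-$ and $\sigma = b_2^+ - b_2^-$, I would solve for $b_2^+$ in terms of $n$, $s$, and $b_1$, obtaining the same expression used in the proof of Theorem~\ref{ThmBound_ns}, namely
\[
    b_2^+(X) = \frac{1}{2}\bigl(\chi(X) + \sigma(X)\bigr) + b_1(X) - 1 = \frac{1}{5}n + \frac{2}{5}s - 1 + b_1(X).
\]

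Next I would impose $b_2^+ = 1$ and use Theorem~\ref{ThmBettinum}, which restricts $b_1(X) \in \{0,1,2\}$ for genus-2 fibrations, to reduce to a small finite search. Setting $b_2^+ = 1$ gives $\frac{1}{5}n + \frac{2}{5}s = 2 - b_1(X)$, i.e. $n + 2s = 5(2 - b_1(X))$. For each value of $b_1$ this is a single linear Diophantine equation in the nonnegative integers $n, s$, which I would intersect with the congruence $n + 12s \equiv 0 \pmod{10}$ and with the existence bound $n + 7s \geq 20$ from Equation~\eqref{eq:ns_bounds_BK}, together with the requirement that the fibration be nontrivial (so $n + s \geq 1$, and in fact $n \geq 1$ since at least one vanishing cycle must be nonseparating by Smith's result cited in the proof of Theorem~\ref{ThmBettinum}). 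Enumerating the lattice points on each line $n + 2s = 10, 5, 0$ that survive all these constraints should yield exactly nine admissible types, and I would then read off how many of them force $b_1 = 0$; I expect that the $b_1 = 2$ line $n + 2s = 0$ and the $b_1 = 1$ line $n + 2s = 5$ each admit only a couple of surviving points while the bulk lie on $n + 2s = 10$ with $b_1 = 0$, giving the stated split of seven versus two.

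The main obstacle I anticipate is reconciling the three separate arithmetic conditions so that exactly nine points emerge rather than some other count. The congruence $n + 12s \equiv 0 \pmod{10}$ simplifies to $n + 2s \equiv 0 \pmod{10}$, which interacts delicately with the lines $n + 2s = 5(2 - b_1)$: in particular $n + 2s = 5$ is never $\equiv 0 \pmod{10}$, so the $b_1 = 1$ case must be handled with care (it may contribute no points, forcing all the $b_1 \neq 0$ types into the $b_1 = 2$ stratum, or the bookkeeping may rely on a subtler reading of which congruence class is actually forced). I would therefore double-check the reduction of Equation~\eqref{eq:ns_relationship} modulo $10$ and verify the endpoints of each interval cut out by $n + 7s \geq 20$ and nontriviality by hand, cross-referencing against the filled and open marks in Figure~\ref{fig:graph} to confirm that the nine surviving types are precisely those lying closest to the line $2n - s = 5$. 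The final count of how many force $b_1 = 0$ then follows directly from which of the nine points lie on the $n + 2s = 10$ line versus the others.
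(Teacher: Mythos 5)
Your overall strategy is the same as the paper's: express $b_2^+$ through $\chi$ and $\sigma$ using Equations~\eqref{eq:fractionsig} and \eqref{eq:eulerchar}, impose $b_2^+=1$, restrict $b_1$ via Theorem~\ref{ThmBettinum}, and enumerate lattice points on the resulting lines subject to the congruence \eqref{eq:ns_relationship}. But there is a genuine arithmetic error that derails the enumeration. We have
\[
\tfrac{1}{2}\bigl(\chi(X)+\sigma(X)\bigr)=\tfrac{1}{2}\Bigl(n+s-4-\tfrac{3}{5}n-\tfrac{1}{5}s\Bigr)=\tfrac{1}{5}n+\tfrac{2}{5}s-2,
\]
so $b_2^+(X)=\tfrac{1}{5}n+\tfrac{2}{5}s+b_1(X)-3$, not $\tfrac{1}{5}n+\tfrac{2}{5}s+b_1(X)-1$ as you wrote. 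Setting $b_2^+=1$ therefore gives $n+2s=5\bigl(4-b_1(X)\bigr)$: the $b_1=0$ types lie on $n+2s=20$ and the $b_1=2$ types on $n+2s=10$. Your lines $n+2s=10,5,0$ are shifted down by $10$, and in particular your $b_1=2$ line $n+2s=0$ contains only the trivial fibration --- contradicting the known types $(4,3)$ and $(6,2)$, which have $b_1=2$, $b_2^+=1$, and $n+2s=10$. Your anticipated split (``the bulk lie on $n+2s=10$ with $b_1=0$'') inverts the actual correspondence. One piece of your reasoning does survive the correction: the $b_1=1$ line ($n+2s=15$ after fixing the constant) is never $\equiv 0 \pmod{10}$, which is equivalent to the paper's parity argument that $b_1=4-2k$ must be even, hence $0$ or $2$.

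Second, even on the corrected lines, your stated constraints (the congruence, $n+7s\geq 20$ from Equation~\eqref{eq:ns_bounds_BK}, and $n\geq 1$) do not cut the count down to nine. On $n+2s=20$ they leave ten types, including $(6,7)$, $(4,8)$, and $(2,9)$; on $n+2s=10$ they leave three, including $(2,4)$. To exclude these you need the further inequality $b_2^-(X)\geq s+1$ (Equation 2 in Lemma 5 of Baykur--Korkmaz, already invoked in the proof of Theorem~\ref{ThmBound_ns}): for instance, for $(6,7)$ with $b_2^+=1$ one gets $b_2^-=b_2^+-\sigma=1+5=6<8=s+1$, a contradiction. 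This is precisely the content of the Remark following the Lemma in the paper, which singles out $(6,7)$ as the type that must be ruled out by this inequality rather than by the line-and-congruence bookkeeping. Without that constraint, no amount of care with the endpoints will produce exactly nine types.
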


\begin{proof}
    We list the nine Lefschetz fibrations. 
    \begin{itemize}
        \item First Betti number $b_1(X) = 2$: $(4,3), (6,2)$
        \item First Betti number $b_1(X)=0$: $(8,6), (10,5) ,(12,4) ,(14,3),(16,2),(18,1),(20,0)$
    \end{itemize}

    In the case of $b_1(X)=2$, the Lefschetz fibrations live on the line $\frac{1}{2}n + s = 5$. For the seven types with $b_1(X)=0$, the manifolds live on the line $\frac{1}{2}n + s = 10$. All other nontrivial genus-2 Lefschetz fibrations live on a line $\frac{1}{2}n + s = 5k$ for $k > 2$. With the exception of $(6,7)$, a genus-2 Lefschetz fibration with $b_1(X)=2$ which has not yet been shown to exist and would have to have $b_2^+= 3$ if it does.

    Recall from the proof of Theorem~\ref{ThmBettinum} the relationship between $b_2^+(X)$, $n$ and $s$, and $b_1(X)$. 
    \begin{equation}
    \begin{split}
        b_2^+(X) &= \frac{1}{5}n + \frac{2}{5}s + b_1(X) - 3\\
        1 &= \frac{1}{5}n + \frac{2}{5}s + b_1(X) -3
    \end{split}
    \end{equation}

    Now, replace $n$ with $10k-2s$ to indicate that we are only considering Lefschetz fibrations with types on the line $\frac{1}{2}n + s = 5k, k=1,2$. 
    
    \begin{equation}
    \begin{split}
        4 &= \frac{1}{5}(10k-2s) + \frac{2}{5}s + b_1(X)\\
        4 &= 2k + b_1(X)
    \end{split}
    \end{equation}
    
    Therefore, 

    \begin{equation}
    \begin{split}
        b_1(X) = 4-2k,
    \end{split}
    \end{equation}

    but we know from Theorem~\ref{ThmBettinum} that either $b_1(X) = 0$ or $b_1(X) = 2$. So, $k$ must equal $1$ or $2$. 
    
    Note that these are the only type $(n,s)$ where $b_2^+(X)=1$, because otherwise they live on a line where $k > 2$.
\end{proof}

\begin{remark}
    The Lefschetz fibration of type $(6,7)$ does not follow the proof of Lemma~\ref{Lem:b2+} as it contradicts the fact that $b_2^- \geq s + 1$. When $b_1(X) = 0$ or $1$, $b_2^+(X) = 6$ and $7$ respectively. Neither are larger than $7$. It can be checked using the signature formula for genus-2 Lefschetz fibration, seen in Equation~\eqref{eq:fractionsig}, that this inequality holds for all nine types listed in the above Lemma. Sato \cite{Sato_2010} observes $b_1^+$ in this family but does not specify the $(n,s)$ combinations within it.
\end{remark}

\bibliographystyle{plain} 
\bibliography{cited}
\end{document}